\newtheorem{theorem}{Theorem}[section]
\newtheorem{proposition}[theorem]{Proposition}
\theoremstyle{definition}
\newtheorem{definition}[theorem]{Definition}
\newtheorem{example}[theorem]{Example}
\theoremstyle{remark}
\newtheorem{remark}[theorem]{Remark}
\numberwithin{equation}{section}
\begin{document}
\setcounter{page}{1}

% \title[short text for running head]{full title}
\title{The Rank of the Cartier operator on Picard Curves}

%    Only \author and \address are required; other information is
%    optional.  Remove any unused author tags.

%    author one information
% \author[short version for running head]{name for top of paper}
\author{Vahid Nourozi And Farhad Rahmati$^{*}$}
%\address{Faculty of Mathematics and Computer Science, Amirkabir University of Technology\\
%(Tehran Polytechnic), 424 Hafez Ave., Tehran 15914, Iran}
%\curraddr{Farhad Rahmati}
%\email{nourozi.v@gmail.com; nourozi@aut.ac.ir}
\address{Faculty of Mathematics and Computer Science, Amirkabir University of Technology\\
(Tehran Polytechnic), 424 Hafez Ave., Tehran 15914, Iran}
\email{nourozi@aut.ac.ir; nourozi.v@gmail.com}
\email{frahmati@aut.ac.ir}

\thanks{*\, Corresponding author}

%    author two information
%\author{}
%\address{}
%\curraddr{Farhad Rahmati}
%\email{}
%\thanks{}

%    \subjclass and \keywords are not used by JAG.

%\date{March 2018}

%\dedicatory{}

%    "Communicated by" -- provide editor's name; required.
%\commby{m}
\keywords{$a$-number; Cartier operator; Super-singular Curves; Picard Curves.}

%    Abstract is required.
\begin{abstract}
%The $a$-number is an invariant of the isomorphism class of the $p$-torsion group scheme. In this paper, we compute $a$-number of Picard  curves for $p \equiv 1$ mod $3$ and, for $p \equiv 1$ mod $3$ , using the action of the Cartier operator on $H^0(\mathcal{X},\Omega^1)$.

For an algebraic curve $\mathcal{X}$ defined over an algebraically closed field of characteristic $p > 0$, $a$-number $a(\mathcal{X})$ is the dimension of the space of exact holomorphic differentials on $\mathcal{X}$. We computed the $a$-number for a family of certain Picard curves using the action of the Cartier operator on $H^0(\mathcal{X},\Omega^1)$.

%For $q=p^s$, the curve $\mathcal{X}$ is given by the equation $y^{q}+y=x^{\frac{q+1}{2}}$ with characteristic $p$, and for $q=2^s$, the curve $\mathcal{Y}$ is given by the equation $\sum_{t=1}^s y^{q/2^t}=x^{q+1}$  with characteristic two. The $a$-number is an invariant of the isomorphism class of the $p$-torsion group scheme. In this paper, we compute a closed formula for the $a$-number of $\mathcal{X}$ (resp. $\mathcal{Y}$) using the action of the Cartier operator on $H^0(\mathcal{X}, \Omega^1)$ (resp. $H^0(\mathcal{Y}, \Omega^1)$).
\end{abstract}

\maketitle
\section{Introduction}

Let $\mathcal{X}$ be a geometrically irreducible, projective, and non-singular algebraic curve defined over the finite field $\mathbb{F}_{\ell}$ of order $\ell$. Let $\mathcal{X}(\mathbb{F}_{\ell})$ denote the set of $\mathbb{F}_{\ell}$-rational points for $\mathcal{X}$. In studying curves over finite fields, determining the size of  $\mathcal{X}(\mathbb{F}_{\ell})$ is a fundamental problem. The exact primary result here is the Hasse-Weil bound, which asserts that
$$\mid \# \mathcal{X}(\mathbb{F}_{\ell}) - (\ell +1) \mid \leq 2g \sqrt{\ell},$$
where $g = g(\mathcal{X})$ is the genus of $\mathcal{X}$.

In this paper, we consider Picard curves of genus $3$  over $\mathbb{F}_{q^2}$ of characteristic $p > 3$. Let $\mathcal{X}$ be a Picard curve. Then $\mathcal{X}$ can be defined by an affine equation of the form
 \begin{equation}\label{xxx}
y^3 =f(x),
 \end{equation}

where $f(x)$ is a polynomial of degree $4$, without multiple roots.
It is easy to see that the maximal curve $\mathcal{X}$ is super-singular because all the slopes of its Newton polygon are equal to $1/2$. This implies that the Jacobian $X:=\mbox{Jac}(\mathcal{X})$ has no $p$-torsion points over $\bar{\mathbb{F}}_{p}$.
 %There are many questions for $\mbox{Jac}(\mathcal{X})$.
A relevant invariant of the $p$-torsion group scheme of the Jacobian of the curve is the $a$-number.

 Consider multiplication by $p$-morphism $[p]: X \rightarrow X$, a finite flat morphism of degree $p^{2g}$.  These factors were $[p]=V \circ F$. Here, $F: X \rightarrow X^{(p)}$   is the relative Frobenius morphism originating from the $p$-power map on the sheaf structure, and the Verschiebung morphism  $V: X^{(p)} \rightarrow X$ is the dual of $F$. The multiplication-by-$p$ kernel on $X$ is defined as $X[p]$.
An important invariant  is the $a$-number $a(\mathcal{X})$ of curve $\mathcal{X}$ defined by
$$a(\mathcal{X})=\mbox{dim}_{\mathbb{\bar{F}}_p} \mbox{Hom}(\alpha_{p}, X[p]),$$
where $\alpha_p$ is the kernel of the Frobenius endomorphism in the group scheme $\mbox{Spec}(k[X]/(X^p))$. Another definition for the $a$-number is
$$ a(\mathcal{X}) = \mbox{dim}_{\mathbb{F}_p}(\mbox{Ker}(F) \cap \mbox{Ker}(V)).$$
%be the group scheme $\mbox{Spec}(k[X]/(X^p))$ with co-multiplication given by
%$$X\rightarrow 1 \otimes X+X \otimes 1.$$
%Write $X = J (A)$, the Jacobian variety of a curve $A$ (non-singular, projective, geometrically irreducible, algebraic) defined over $k$.
%The  $a$-number $a(\mathcal{X})$ defined to be the dimension of the vector space $\mbox{Hom}(\alpha_p, A)$. And we can write $a(\mathcal{X})$ instead of $a_{J(\mathcal{X})}$ that refers to the $a$-number of $\mathcal{X}$. see \cite{suz}

 %Let $\mathcal{A}$ be an abelian variety defiend over  $k$. Let $\alpha_p$ be the group scheme $%\mbox{Spec}(k[X]/(X^p))$ with co-multiplication given by
%$$X\rightarrow 1 \otimes X+X \otimes 1.$$
%The group $\mbox{Hom}(\alpha_p, A)$ can be considered as $k$-vector space since $\mbox{End}(\alpha_p)=k$.
%The  $a$-number $a(\mathcal{A} )$ defined to be the dimension of the vector space $\mbox{Hom}(\alpha_p, A)$.
%Normally we can define  the $a$-number $a(\mathcal{X})$ of $\mathcal{X}$ as  the $a$-number of its Jacobian variety $\mathcal{J}_{\mathcal{X}}$. As a matter of fact, the $a$-number of a curve is a  birational invariant which can be defined the dimension of the space of exact holomorphic differentials.

 A few results on the rank of the Cartier operator (especially $a$-number) of curves are introduced by Kodama and Washio \cite{13}, González \cite{8}, Pries and Weir \cite{17}, Yui \cite{Yui}. Besides that, Vahid talked about the rank of the Cartier of the maximal curves in \cite{esfahan}, the hyperelliptic curve in \cite{shiraz}, and the maximal function fields of $\mathbb{F}_{q^2}$ in \cite{misori, behrooz}. Vahid also wrote about these topics in his \cite{phd} dissertation.

 In Section \ref{202}, we prove that the $a$-number of the  Picard curves  with Equation (\ref{xxx}) is $0$ for $p \equiv 1$ mod $3$, and is $1$ for $p \equiv 2$ mod $3$ , see Theorem \ref{thex}. The proofs directly use the Cartier operator action on $H^0(\mathcal{X}, \Omega^1)$. Finally, we provide an example of the $a$-number of Picard curves obtained from the Hasse-Witt matrix.

\section{The Cartier operator}
Let $k$ be an algebraically closed field of characteristic $p>0$.
Let $\mathcal{X}$ be a
curve defined over $k$.
The Cartier operator is a $1/p$-linear operator acting on the sheaf $\Omega^1:=\Omega^1_{\mathcal{X}}$ of differential forms on $\mathcal{X}$ with positive characteristics, $p>0$.

 Let $K=k(\mathcal{X})$ be the function field of curve $\mathcal{X}$ of genus $g$ defined over  $k$. The separating variable for $K$ is an element $x \in K \setminus K^p$. We can write any function $f \in K$ uniquely in form
 $$f=f^p_0 + f^p_1x +\cdots + f^p_{p-1}x^{p-1},$$
 where $f_i \in K$, for $i = 0, \cdots , p -1$.

\begin{definition}
  (Cartier Operator). Let $\omega \in  \Omega_{K/K_p}$. There exist $f_0,\cdots, f_{p-1}$ such
that $\omega = (f^p_0 + f^p_1x +\cdots + f^p_{p-1}x^{p-1})dx$. The Cartier operator $\mathfrak{C}$ is defined by
$$\mathfrak{C}(\omega) := f_{p-1}dx.$$
This definition does not depend on the choice of $x$ (see \cite[Proposition 1]{100}).
\end{definition}
We refer the reader to \cite{100,30,40,150} for proofs of the following statements.
%\begin{proposition}
 % (Local properties of $\mathcal{C}$). Let $P$ be a place of $F$ . For all $\omega \in  \Omega_{F/F_q^l}$,
%  \begin{itemize}
 %   \item [1.]  $v_P (\omega) \geq 0 \Rightarrow v_P (\mathcal{C}(\omega)) \geq 0$;
 %   \item [2.]  $v_P (\omega) \leq -2 \Rightarrow v_P (\mathcal{C} (\omega)) > v_P (\omega)$;
 %   \item [3.]  $v_P (\omega) = -1 \Rightarrow v_P (\mathcal{C} (\omega)) = -1$;
 %   \item [4.]  $res_P (\mathcal{C} (\omega)) = res_P (\omega)^{1/p}$.
%  \end{itemize}
%\end{proposition}

\begin{proposition}
  (Global Properties of $\mathfrak{C}$). For all $\omega \in  \Omega_{K/K_q}$ and all $f \in K$,

  \begin{itemize}
    \item $\mathfrak{C}(f^p\omega) = f\mathfrak{C}(\omega)$;
    \item $\mathfrak{C}(\omega) = 0 \Leftrightarrow \exists h \in K, \omega = dh$;
    \item $\mathfrak{C}(\omega) = \omega \Leftrightarrow \exists h \in K, \omega = dh/h$.
  \end{itemize}
\end{proposition}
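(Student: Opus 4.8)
\medskip
\noindent\emph{Proof sketch.} The plan is to read off the first two properties, and the easy implication of the third, directly from the defining expansion, leaving the converse of the third property (Cartier's theorem on logarithmic differentials) as the one point of substance. Throughout I fix a separating variable $x$, so that $\omega=(f_0^p+f_1^px+\cdots+f_{p-1}^px^{p-1})\,dx$ with $f_i\in K$ and $\mathfrak{C}(\omega)=f_{p-1}\,dx$.

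For the first bullet, multiplying by $f^p$ replaces each coefficient $f_i^p$ by $(ff_i)^p$, so the $x^{p-1}$-coefficient becomes $(ff_{p-1})^p$ and $\mathfrak{C}(f^p\omega)=ff_{p-1}\,dx=f\,\mathfrak{C}(\omega)$. For the second bullet: if $\omega=dh$, expand $h=h_0^p+\cdots+h_{p-1}^px^{p-1}$; because $d(h_i^p)=0$ in characteristic $p$ one gets $dh=\sum_{i=1}^{p-1}i\,h_i^px^{i-1}\,dx$, an expression with no $x^{p-1}$ term, so $\mathfrak{C}(dh)=0$. Conversely, if $\mathfrak{C}(\omega)=0$ then $f_{p-1}=0$ and $\omega=\sum_{i=0}^{p-2}f_i^px^i\,dx=d\bigl(\sum_{i=0}^{p-2}(i+1)^{-1}f_i^px^{i+1}\bigr)$, where the coefficients $(i+1)^{-1}$ make sense since $1,\dots,p-1$ are units in $k$ and the function in parentheses lies in $K$.

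For the third bullet, $(\Leftarrow)$ is once more a computation: if $h\in K^p$ then $dh=0$ and both sides are $0$; otherwise $h\notin K^p$, so $h$ is itself a separating variable, and (using that $\mathfrak{C}$ does not depend on the chosen variable) $\tfrac{dh}{h}=(h^{-1})^ph^{p-1}\,dh$ has $h^{p-1}$-coefficient $(h^{-1})^p$, whence $\mathfrak{C}(dh/h)=h^{-1}\,dh=dh/h$. The implication $(\Rightarrow)$ is the real content, and I would treat it in three steps. First, I would record the action of $\mathfrak{C}$ on Laurent tails at a point $P$: $\mathfrak{C}(t^{pm-1}\,dt)=t^{m-1}\,dt$ and $\mathfrak{C}(t^n\,dt)=0$ for $n\not\equiv-1\pmod p$; it follows that $\mathfrak{C}$ strictly decreases any pole of order $\geq2$ and raises the residue of a simple pole to the power $1/p$, so $\mathfrak{C}(\omega)=\omega$ forces $\omega$ to have at worst simple poles, each residue $m_i$ obeying $m_i^p=m_i$, i.e. $m_i\in\mathbb{F}_p$. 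Second, since $\mathrm{Pic}^0(\mathcal{X})$ is divisible over the algebraically closed ground field, there is $h_0\in K^{*}$ whose divisor is $p$-divisible away from the $P_i$ and satisfies $v_{P_i}(h_0)\equiv m_i\pmod p$; then $\omega-dh_0/h_0$ is holomorphic and, by the $(\Leftarrow)$ already proved, still fixed by $\mathfrak{C}$. Third, for a holomorphic fixed differential I would solve $du/u=\omega$ locally at each point by successive approximation $u=1+c_1t+c_2t^2+\cdots$: the recursion determines $c_m$ for $p\nmid m$, and the only potential obstruction, at $m=pk$, is cancelled by the relations $a_{p(n+1)-1}=a_n^{\,p}$ that $\mathfrak{C}(\omega)=\omega$ imposes on the Taylor coefficients of $\omega$; patching the local solutions gives a global $h$ with $\omega=dh/h$.

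The step I expect to be the main obstacle is this last implication. Checking that the successive-approximation obstructions vanish identically under the Cartier-fixed hypothesis, and carrying out the globalization (choosing $h_0$ with the right divisor and gluing the local logarithms into a single $h\in K^{*}$), is where the genuine work of Cartier's theorem lies; this is why I would cite \cite{100,30,40,150} for it rather than reproduce the argument in full. The remaining assertions are routine manipulation of the defining expansion.
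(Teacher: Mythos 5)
The paper itself gives no proof of this proposition: it only refers the reader to \cite{100,30,40,150}. Your proposal therefore supplies more than the paper does for the routine parts, and those parts are correct: the $p$-basis expansion argument for $\mathfrak{C}(f^p\omega)=f\,\mathfrak{C}(\omega)$, the two-way computation for the second bullet (integrating $\sum_{i\le p-2}f_i^px^i\,dx$ using that $1,\dots,p-1$ are invertible in $k$), and the $(\Leftarrow)$ half of the third bullet via the change of separating variable to $h$ when $h\notin K^p$ are exactly the standard verifications, and they are carried out accurately.

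One caveat concerns your sketch of $(\Rightarrow)$. Steps one and two are sound: a Cartier-fixed $\omega$ has at worst simple poles with residues in $\mathbb{F}_p$, the residue theorem gives $\sum_i m_i\equiv 0 \bmod p$ so the degree obstruction vanishes, and divisibility of $\mathrm{Pic}^0(\mathcal{X})(k)$ produces $h_0$ making $\omega-dh_0/h_0$ holomorphic and still fixed. But the third step as described would not close the argument: the formal local solutions of $du/u=\omega$ at the various points cannot simply be ``patched'' into a single rational function $h\in K^{*}$; that gluing is a genuinely global problem and is precisely the content of the theorem. The standard proofs handle it differently, e.g.\ by a dimension count: the Cartier-fixed holomorphic differentials form an $\mathbb{F}_p$-space whose dimension is the stable rank of $\mathfrak{C}$, which equals the $p$-rank of $\mathrm{Jac}(\mathcal{X})$, while the holomorphic logarithmic differentials $dh/h$ with $\mathrm{div}(h)\in p\,\mathrm{Div}(\mathcal{X})$ already form an $\mathbb{F}_p$-space of that same dimension via $\mathrm{Jac}(\mathcal{X})[p](k)$, forcing equality (alternatively one argues cohomologically). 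Since you explicitly defer this implication to \cite{100,30,40,150}, exactly as the paper does for the entire proposition, your write-up is acceptable as it stands; just do not present the local successive-approximation-plus-patching heuristic as if it were the actual proof of that direction.
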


\begin{remark}\label{remark}
Moreover, one can easily show
that
\begin{equation*}
\mathfrak{C}(x^j dx) = \left\{
\begin{array}{ccc}
    0 & \mbox{if}&  \hspace{.4cm} p \nmid j+1  \\
    x^{s-1}dx & \mbox{if} &  \hspace{.4cm} j+1=ps.
\end{array} \right.
\end{equation*}
\end{remark}

%A differential $\omega$ is holomorphic if $div(\omega)$ is effective. The set $H^0(\mathcal{X}, \Omega^1)$ of holomorphic differentials is a $g$-dimensional $K$-vector subspace of $\Omega^1$ such that $\mathcal{C}(H^0(\mathcal{X}, \Omega^1)) \subseteq H^0(\mathcal{X}, \Omega^1)$.

%The dimension $a(\mathcal{X})$ of the kernel of $\mathcal{C}$ (or equivalently, the dimension of the space of exact holomorphic differentials on $\mathcal{X}$) is the $a$-number of $\mathcal{X}$.

%Let $\mathcal{B}=\{\omega_1,\cdots,\omega_g\}$ be a basis of $H^0(\mathcal{X}, \Omega^1)$. Then for any $\omega \in H^0(\mathcal{X}, \Omega^1)$,
%$$\mathcal{C}(\omega) =\sum_{i=1}^{g}a_{i,j}\omega_i.$$
%The matrix $A(\mathcal{X}) =(a^{1/p}_{ij})$ is the Hasse–Witt(or Cartier–Manin) matrix of $\mathcal{X}$. The $a$-number $a(\mathcal{X})$ is the co-rank of $A(\mathcal{X})$ (or, equivalently, of $A^p(\mathcal{X}) =(a_{ij})$). see \cite{maria}.

%\begin{remark}\label{r3.1}
%\rm{ Moreover, one can easily show
%that

%\[\mathscr{C}(x^j dx) = \left\{
%\begin{array}{ccc}
%0 & \mbox{if}&  \hspace{.4cm} p \nmid j+1 \\
%x^{s-1}dx & \mbox{if} &  \hspace{.4cm} j+1=ps. \\
%\end{array}\right.\]}
%\end{remark}

If $\mbox{div}(\omega)$ is effective, then differential $\omega$ is holomorphic. The set $H^0(\mathcal{X}, \Omega ^1)$ of holomorphic differentials is a $g$-dimensional $k$-vector subspace of $\Omega^1$, such that $\mathfrak{C}(H^0(\mathcal{X}, \Omega^1)) \subseteq H^0(\mathcal{X}, \Omega^1)$. If $\mathcal{X}$ is a curve, then the $a$-number of $\mathcal{X}$ equals the dimension of the kernel of the Cartier operator $H^0(\mathcal{X}, \Omega^1)$ (or equivalently, the dimension of the space of the exact holomorphic differentials on $\mathcal{X}$) (see \cite[5.2.8]{14}).
% Let $\mathcal{X}$ be a plane model of $\mathcal{X}$ given by an affined equation $\mathcal{X} : F(X, Y ) = 0$ with an irreducible polynomial $F \in K[X, Y ]$ of degree $n > 3$. Then $K(\mathcal{X}) = K(x, y)$ with $F(x, y) = 0$. If $\mathcal{X}$ has only ordinary singularities, its canonical adjoints are the curves of formal degree $n - 3$ with at least an $(r - 1)$-fold point at every $r$-fold point of $\mathcal{X}$; see \cite{1111}.

The following theorem is based on Gorenstein; see \cite[Theorem 12]{9}.
\begin{theorem}\label{2.2}
  A differential $\omega \in \Omega^1$ is holomorphic if and only if it has the form $(h(x, y)/F_y)dx$, where $H:h(X, Y) =0$ is a canonical adjoint.
\end{theorem}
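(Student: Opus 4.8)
The plan is to prove the equivalence on the plane model $C:F(x,y)=0$ of degree $d$ (with $d=4$ for a Picard curve), using the normalization $\pi:\mathcal{X}\to C$ and the identification of $H^0(\mathcal{X},\Omega^1)$ with the space of regular differentials on $\mathcal{X}$. First I would isolate the distinguished rational differential $\eta:=dx/F_y$: differentiating the relation $F(x,y)=0$ along $\mathcal{X}$ gives $F_x\,dx+F_y\,dy=0$, hence $dx/F_y=-dy/F_x$, so $\eta$ is a well-defined nonzero element of $\Omega^1$ that does not depend on which of the two expressions is used. The theorem then reduces to computing $\operatorname{div}(\eta)$ and to showing that, writing $\omega=R\,\eta$ with $R=\omega/\eta\in k(\mathcal{X})$, the differential $\omega$ is regular precisely when $R$ is (the restriction to $C$ of) a form of degree $d-3$ satisfying the adjoint conditions at the singularities of $C$.

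Next I would compute $\operatorname{div}(\eta)$. Away from $L_\infty$ and the singular points this is immediate: where $F_y\ne 0$, $x$ is a local parameter and $\eta$ is regular and nonvanishing, and where $F_y=0$ but $F_x\ne 0$ one uses $\eta=-dy/F_x$; together these cover all smooth affine points of $C$, since by the Euler relation $F_x=F_y=0$ at an affine point of $C$ would force that point to be singular. Hence $\operatorname{div}(\eta)$ is supported over $C\cap L_\infty$ and over the singular locus, and a local analysis — the technical core, following Gorenstein — shows that the contribution along $L_\infty$ is a zero of total order $d-3$ (consistent with $\deg\operatorname{div}(\eta)=2g-2=d(d-3)$ in the nonsingular case), while at a singular point $P$ the differential $h\,\eta$ pulls back to a regular differential on each branch exactly when $h$ vanishes there to the order prescribed by the conductor (adjoint) ideal of $\mathcal{O}_{C,P}$ in $\mathcal{O}_{\mathcal{X},P}$. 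This last point is where one genuinely needs that plane curve singularities are Gorenstein (so that the dualizing sheaf $\omega_C$ is invertible, $\omega_C\cong\mathcal{O}_C(d-3)$ by adjunction, and the conductor is self-dual), and it is the step I expect to be the main obstacle; conceptually it is the statement that $\pi_*\omega_{\mathcal{X}}$ corresponds under $\omega_C\cong\mathcal{O}_C(d-3)$ to the subsheaf of degree-$(d-3)$ forms lying in the conductor, with $1\mapsto dx/F_y$.

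Granting this, both implications are cohomology bookkeeping. If $H:h(X,Y)=0$ is a canonical adjoint of degree $d-3$, then $\operatorname{div}(h\eta)\ge 0$: the poles of $h$ as a function on $C$ (only along $L_\infty$, of order at most $d-3$) are absorbed by the order-$(d-3)$ zero of $\eta$ there, and the adjoint conditions cancel the remaining negativity over the singular points, so $h\,dx/F_y\in H^0(\mathcal{X},\Omega^1)$. Conversely, given $\omega\in H^0(\mathcal{X},\Omega^1)$, put $R=\omega/\eta$; then $\operatorname{div}(R)\ge-(d-3)(C\cap L_\infty)-\mathcal{A}$ with $\mathcal{A}$ the adjoint divisor, so $R$ is a section of $\mathcal{O}_C(d-3)$ subject to the adjoint conditions, and lifting it along the surjection $H^0(\mathbb{P}^2,\mathcal{O}(d-3))\twoheadrightarrow H^0(C,\mathcal{O}_C(d-3))$ (surjective since $H^1(\mathbb{P}^2,\mathcal{O}(-3))=0$) produces a degree-$(d-3)$ form $h$ with $\omega=(h(x,y)/F_y)\,dx$ and $H:h=0$ a canonical adjoint. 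For a Picard curve the quartic model $y^3=f(x)$ with $f$ squarefree of degree $4$ is already nonsingular, so the adjoint conditions are vacuous, "canonical adjoint'' simply means an arbitrary line $aX+bY+cZ$, and one recovers the $3$-dimensional space $\langle dx/F_y,\ x\,dx/F_y,\ y\,dx/F_y\rangle=H^0(\mathcal{X},\Omega^1)$.
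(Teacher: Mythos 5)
The paper offers no proof of Theorem \ref{2.2} for you to be compared against: the statement is quoted from Gorenstein (\cite[Theorem 12]{9}) and is used in Section \ref{202} only through the resulting basis $\mathcal{B}$. Measured against that cited source, your outline is the standard modern route to the classical result: identify $H^0(\mathcal{X},\Omega^1)$ with the regular differentials on the normalization $\pi\colon\mathcal{X}\to C$, single out $\eta=dx/F_y=-dy/F_x$, establish $\operatorname{div}(\eta)=(d-3)\,\pi^*(C\cdot L_\infty)-\mathcal{A}$ with $\mathcal{A}$ the conductor (adjoint) divisor, and convert between regular differentials and degree-$(d-3)$ adjoint forms, using $H^1(\mathbb{P}^2,\mathcal{O}_{\mathbb{P}^2}(-3))=0$ to lift sections of $\mathcal{O}_C(d-3)$ to plane forms. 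Your closing remark is also correct: the quartic model $y^3=f(x)$ with $f$ squarefree is nonsingular, including at its unique point $(0:1:0)$ at infinity, so the adjoint conditions are vacuous, the adjoints are lines, and one recovers exactly the basis $\mathcal{B}$ used in the paper.

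As it stands, though, your text is a program rather than a proof: the two load-bearing steps are asserted, not established. The local equivalence at a singular point (``$h\eta$ is regular on every branch if and only if $h$ vanishes to the orders prescribed by the conductor''), which indeed rests on the Gorenstein property of plane-curve singularities, and the computation of $\operatorname{div}(\eta)$ along $L_\infty$ are precisely the content of the theorem being cited, and you explicitly defer both. Three smaller corrections if you write it out in full: (i) in the converse direction the inequality must read $\operatorname{div}(R)\ge-(d-3)\,\pi^*(C\cdot L_\infty)+\mathcal{A}$ --- the conductor forces zeros of $R$ over the singular points rather than permitting poles; with the sign $-\mathcal{A}$ as you wrote it, the adjoint conditions you invoke in the next clause do not follow. (ii) The zero of $\eta$ at infinity has order $d-3$ along the intersection divisor $C\cdot L_\infty$, hence total order $d(d-3)$ in the smooth case, not ``total order $d-3$''. (iii) ``Poles of $h$ along $L_\infty$ of order at most $d-3$'' is only valid when $L_\infty$ is transverse to $C$ and misses the singular points; either choose coordinates to guarantee this, or argue branchwise with intersection multiplicities --- for the Picard quartic, $L_\infty$ meets $C$ only at $(0:1:0)$ with multiplicity $4$, where $x$ and $y$ have poles of order $3$ and $4$ and $\eta$ has a zero of order $4=(d-3)\cdot 4$, so the bookkeeping still closes, but not for the reason you state.
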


\begin{theorem}\cite{maria}\label{2.3}
  With the above assumptions,
\begin{equation*}
\mathfrak{C}(h\dfrac{dx}{F_{y}}) = (\dfrac{\partial^{2p-2}}{\partial x^{p-1}\partial y^{p-1}}(F^{p-1}h))^{\frac{1}{p}}\dfrac{dx}{F_{y}}
\end{equation*}
for any $h \in K(\mathcal{X})$.
\end{theorem}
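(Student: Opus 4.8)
\medskip
\noindent\emph{Proof plan.}
The plan is to strip off the factor $1/F_{y}$, replace the resulting $(p-1)$-st power by a $(p-1)$-st partial derivative of $F^{p-1}$ taken modulo $F$, and then reduce to the monomial rule of the Remark. Throughout, $\partial_{x}^{p-1},\partial_{y}^{p-1}$ denote the $(p-1)$-st partial derivatives in $x$ and $y$, so that $\partial_{x}^{p-1}\partial_{y}^{p-1}=\partial^{2p-2}/(\partial x^{p-1}\partial y^{p-1})$, and the only facts about $\mathfrak{C}$ used are its additivity, the identity $\mathfrak{C}(f^{p}\omega)=f\,\mathfrak{C}(\omega)$, and the formula for $\mathfrak{C}(x^{j}dx)$ above.

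First I would remove $F_{y}$. Since $1/F_{y}=(1/F_{y})^{p}F_{y}^{p-1}$, the rule $\mathfrak{C}(f^{p}\omega)=f\mathfrak{C}(\omega)$ gives
\[
\mathfrak{C}\left(h\,\frac{dx}{F_{y}}\right)=\mathfrak{C}\left(\Bigl(\tfrac{1}{F_{y}}\Bigr)^{p}\bigl(hF_{y}^{p-1}\bigr)\,dx\right)=\frac{1}{F_{y}}\,\mathfrak{C}\bigl(hF_{y}^{p-1}\,dx\bigr),
\]
so it suffices to prove $\mathfrak{C}(hF_{y}^{p-1}dx)=\bigl(\partial_{x}^{p-1}\partial_{y}^{p-1}(F^{p-1}h)\bigr)^{1/p}dx$. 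Writing a rational $h$ as $f/B$ with $f,B\in k[x,y]$ and $F\nmid B$, and using $1/B=(1/B)^{p}B^{p-1}$ once more, one reduces to the case $h\in k[x,y]$: indeed $\mathfrak{C}(h\,dx/F_{y})=\tfrac1B\,\mathfrak{C}\bigl((fB^{p-1})\,dx/F_{y}\bigr)$, while $\partial_{x}^{p-1}\partial_{y}^{p-1}(F^{p-1}fB^{p-1})=B^{p}\,\partial_{x}^{p-1}\partial_{y}^{p-1}(F^{p-1}h)$ because $\partial_{x}(B^{p}\cdot)=B^{p}\partial_{x}(\cdot)$ and likewise for $\partial_{y}$, so the two adjustments cancel.

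Next I would turn $F_{y}^{p-1}$ into a derivative. Viewing $F^{p-1}$ as a product of $p-1$ equal factors and expanding $\partial_{y}^{\,j}(F^{p-1})$ by Leibniz, every term is a product $\prod_{i}\partial_{y}^{a_{i}}F$ with $\sum_i a_{i}=j$: for $j<p-1$ some $a_{i}=0$ and the term is divisible by $F$, while for $j=p-1$ the only term not divisible by $F$ is $(p-1)!\,F_{y}^{p-1}$ (all $a_{i}=1$). Expanding $\partial_{y}^{p-1}(F^{p-1}h)$ the same way, only $\partial_{y}^{p-1}(F^{p-1})\cdot h$ survives modulo $F$, so by Wilson's theorem $(p-1)!\equiv-1$ one gets
\[
F_{y}^{p-1}h\;\equiv\;-\,\partial_{y}^{p-1}\bigl(F^{p-1}h\bigr)\pmod{F},
\]
hence these agree in $K(\mathcal{X})$ and $\mathfrak{C}(hF_{y}^{p-1}dx)=\mathfrak{C}\bigl(-\partial_{y}^{p-1}(F^{p-1}h)\,dx\bigr)$. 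It then remains to prove, for every $P\in k[x,y]$, that $\mathfrak{C}\bigl(-\partial_{y}^{p-1}(P)\,dx\bigr)=\bigl(\partial_{x}^{p-1}\partial_{y}^{p-1}P\bigr)^{1/p}dx$. In characteristic $p$ one has $\partial_{y}^{p-1}(y^{n})=(p-1)!\,y^{n-p+1}$ for $n\equiv p-1\pmod p$ and $0$ otherwise, so, writing $P_{k}(x)\in k[x]$ for the coefficient of $y^{p-1+pk}$ in $P$, we find $-\partial_{y}^{p-1}(P)=\sum_{k\ge0}P_{k}(x)\,(y^{k})^{p}$. Applying $\mathfrak{C}(f^{p}\omega)=f\mathfrak{C}(\omega)$ with $f=y^{k}$ and then $\mathfrak{C}(x^{m}dx)$ (nonzero only for $m\equiv p-1\pmod p$) to each $\mathfrak{C}(P_{k}\,dx)$ gives
\[
\mathfrak{C}\bigl(-\partial_{y}^{p-1}(P)\,dx\bigr)=\sum_{j,k\ge0}[P]_{\,p(j+1)-1,\;p(k+1)-1}^{\,1/p}\,x^{j}y^{k}\,dx,
\]
where $[P]_{a,b}$ is the coefficient of $x^{a}y^{b}$ in $P$; and the identical ``keep only the exponents $\equiv p-1$'' effect of $\partial_{x}^{p-1}$ followed by $\partial_{y}^{p-1}$, again with $(p-1)!\equiv-1$, shows $\partial_{x}^{p-1}\partial_{y}^{p-1}P$ is precisely the $p$-th power of that same sum, so the two sides coincide. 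Taking $P=F^{p-1}h$ and combining with the two previous steps proves the formula.

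I do not expect any single step to be deep; the genuine work is bookkeeping — collapsing the stray factorials via $(p-1)!\equiv-1$ (together with $(-1)^{1/p}=-1$), checking that the congruence of the third step really descends to a relation between differentials on $\mathcal{X}$, and handling rational $h$ and the behaviour at infinity, for which it is enough that the asserted equality of differentials can be tested on the dense affine locus where $F_{y}\neq0$. The most conceptual packaging is to read $h\,dx/F_{y}$ as $\pm$ the Poincar\'e residue along $\mathcal{X}$ of the rational $2$-form $h\,dx\wedge dy/F$ on $\mathbb{A}^{2}$; the steps above then amount to computing the two-variable Cartier operator of that $2$-form and taking residues, and the hands-on argument presented here merely avoids developing the two-dimensional formalism.
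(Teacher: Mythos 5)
Your argument is correct, but note that the paper itself gives no proof of this statement: Theorem \ref{2.3} is quoted verbatim from \cite{maria} (where it in turn goes back to the classical Cartier-operator formula of St\"ohr--Voloch/Seshadri type), so there is no in-paper proof to compare against. What you wrote is essentially the standard argument behind the cited result, and all three steps check out: (i) pulling out the $p$-th powers $F_y^{-p}$ and $B^{-p}$ via $\mathfrak{C}(f^p\omega)=f\mathfrak{C}(\omega)$, with the compensating factor $B^p$ passing through $\nabla$ because $p$-th powers are constants for $\partial_x,\partial_y$; (ii) the Wilson-theorem congruence $F_y^{p-1}h\equiv-\partial_y^{p-1}(F^{p-1}h)\pmod F$, where you correctly observe that $\partial_y^{j}(F^{p-1})\equiv 0 \pmod F$ for $j<p-1$, so the two integrands are literally the same element of $K(\mathcal{X})$ and $\mathfrak{C}$ may be applied to either; and (iii) the monomial computation, where $\partial_y^{p-1}(y^n)=(p-1)!\,y^{n-p+1}$ for $n\equiv p-1 \pmod p$ (and $0$ otherwise) combines with Remark \ref{remark} to give exactly the coefficient extraction of Equation \eqref{123321}, the signs cancelling since $((p-1)!)^2\equiv 1$. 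The only hypotheses you use implicitly and could state are that $x$ is a separating variable (equivalently $F_y\neq 0$ in $K(\mathcal{X})$, true here since $p>3$ and $F=y^3-f(x)$) and that the right-hand side is independent of the chosen lift of $h$ to $k(x,y)$ (changing the lift by a multiple of $F$ changes $F^{p-1}h$ by a multiple of $F^{p}$, which $\nabla$ sends to $F^p$ times something, hence to $0$ on the curve); your closing remarks about behaviour at infinity are not actually needed, since the identity is one of rational differentials in $\Omega_{K/k}$.
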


The differential operator $\nabla$ is defined by
$$\nabla = \dfrac{\partial^{2p-2}}{\partial x^{p-1} \partial y^{p-1}},$$
has the property
\begin{equation}\label{123321}
  \nabla(\sum_{i,j} c_{i,j}X^iY^j)= \sum_{i,j} c_{ip+p-1,jp+p-1}X^{ip}Y^{jp}.
\end{equation}

\section{The $a$-number of  Picard Curves}\label{202}

This section considers the Picard curves over a finite field with $q^2$ elements of characteristic $p > 3$. Let $\mathcal{X}$ be the Picard curve of genus $3$ over $\mathbb{F}_{q^2}$. Then $\mathcal{X}$ can be defined by an affine equation of the form
\begin{equation}
y^3=f(x),
\end{equation}
where $f(x)$ is a polynomial of degree $4$ without multiple roots. From Theorem \ref{2.2}, one can find a basis for the space $H^0(\mathcal{X}, \Omega^1)$ of holomorphic differentials on
$\mathcal{X}$, namely

$$\mathcal{B} = \Bigg\{ z_i=\dfrac{h_idx}{f_y} \Bigg\},$$
where $h_1 = 3, h_2 = 3x$ and $h_3 = 3y$ for $1 \leq i \leq 3$. Therefore, $\mathcal{B} = \{ \frac{dx}{y^2},\frac{xdx}{y^2},\frac{dx}{y}
\}$.

\begin{theorem}\label{thex}
Suppose that $p\not\equiv 0$ mod $3$ and $f(x)$ have a non-zero constant-coefficient, then
 \begin{itemize}
 \item[1.] If $p \equiv 1$ mod $3$, then $a(\mathcal{X})=0$.
 \item[2.] If $p \equiv 2$ mod $3$, then $a(\mathcal{X})=1$.
 \end{itemize}
\end{theorem}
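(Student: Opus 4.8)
The plan is to work entirely with the explicit basis $\mathcal{B} = \{\omega_1, \omega_2, \omega_3\}$ where $\omega_1 = dx/y^2$, $\omega_2 = x\,dx/y^2$, $\omega_3 = dx/y$, and to compute the matrix of $\mathfrak{C}$ on $H^0(\mathcal{X},\Omega^1)$ using Theorem~\ref{2.3}. Writing $f(x) = \sum_{k=0}^{4} a_k x^k$ with $a_0 \neq 0$ and $F(x,y) = y^3 - f(x)$ so that $F_y = 3y^2$, each basis element has the form $h_i\,dx/F_y$ with $h_1 = 3$, $h_2 = 3x$, $h_3 = 3y$. By Theorem~\ref{2.3}, $\mathfrak{C}(h_i\,dx/F_y) = \bigl(\nabla(F^{p-1}h_i)\bigr)^{1/p}\,dx/F_y$, and by \eqref{123321} the operator $\nabla$ extracts from $F^{p-1}h_i = (y^3 - f(x))^{p-1}h_i$ exactly the monomials $c_{r,s}x^r y^s$ with $r \equiv s \equiv -1 \pmod p$, replacing them by $c_{r,s}x^{(r+1)/p - 1}y^{(s+1)/p - 1}$ after taking $p$-th roots (so $c_{r,s}$ becomes $c_{r,s}^{1/p}$, but over $\bar{\mathbb{F}}_p$ this does not affect ranks).

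The key computation is to expand $(y^3 - f(x))^{p-1} = \sum_{m=0}^{p-1}\binom{p-1}{m}(-1)^{m}y^{3(p-1-m)}f(x)^{m}$ and determine which powers of $y$ occur. For $\omega_1$ and $\omega_2$ the relevant $y$-exponent is $3(p-1-m)$, and we need $3(p-1-m) \equiv -1 \pmod p$, i.e. $3m \equiv 3p - 2 \equiv -2 \pmod p$, which has a unique solution $m_0 \in \{0,\dots,p-1\}$; moreover the surviving term must satisfy $3(p-1-m_0) = ps$ for the $y$-power to be a genuine $p$-th power, forcing $3(p-1-m_0)/p$ to be a nonnegative integer. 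One checks this forces $s = 1$ when $p \equiv 2 \pmod 3$ (giving $m_0 = (p-2)/3 \cdot \ldots$, to be pinned down) and yields no valid term when $p \equiv 1 \pmod 3$, because then the congruence solution $m_0$ makes $3(p-1-m_0)$ not divisible by $p$ within range. For $\omega_3$ the $y$-exponent is $3(p-1-m)+1$, shifting the analysis by one, and a parallel case split in $p \bmod 3$ governs whether any term survives. After isolating the surviving $m_0$, one extracts the coefficient of $x^{r}$ in $f(x)^{m_0}$ (times $x$ for $\omega_2$) for the unique $r \equiv -1 \pmod p$ with $0 \le r \le 4m_0$ (resp. $4m_0 + 1$), and reads off the $3\times 3$ Cartier matrix $A$ in the basis $\mathcal{B}$. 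Then $a(\mathcal{X}) = \dim\ker(\mathfrak{C}\!\restriction_{H^0}) = 3 - \operatorname{rank}(A)$.

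I expect the main obstacle to be the bookkeeping in the two divisibility/congruence conditions simultaneously — that $3(p-1-m) \equiv -1 \pmod p$ \emph{and} that the resulting $y$-exponent be an exact multiple of $p$ — and showing that in the $p \equiv 1 \pmod 3$ case \emph{all three} basis differentials are killed (so $A = 0$ and $a(\mathcal{X}) = 0$), while in the $p \equiv 2 \pmod 3$ case exactly one one-dimensional piece of the image survives, with the nonvanishing of the relevant binomial coefficient $\binom{p-1}{m_0}$ (automatic, since $\binom{p-1}{m} \equiv (-1)^m \pmod p$) and of the relevant coefficient of $f(x)^{m_0}$ — here the hypothesis $a_0 \neq 0$ is what guarantees $f^{m_0}$ has a nonzero constant term and hence that the needed coefficient does not identically vanish, pinning $\operatorname{rank}(A) = 2$. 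The final step is to assemble these into the stated dichotomy $a(\mathcal{X}) \in \{0,1\}$.
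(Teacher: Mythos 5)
Your overall strategy --- computing the matrix of $\mathfrak{C}$ on $\mathcal{B}$ via Theorem \ref{2.3}, the binomial expansion of $(y^3-f(x))^{p-1}$, and the extraction rule \eqref{123321}, then using $a(\mathcal{X})=3-\operatorname{rank}$ --- is viable, and is essentially how the Hasse--Witt matrices \eqref{hass1}--\eqref{hass2} of Estrada Sarlabous arise (the paper's own proof instead substitutes $y^{3}=f(x)$ directly and applies Remark \ref{remark}, which amounts to the same coefficient extraction). But your execution contains errors that invert the conclusion. The rule \eqref{123321} keeps monomials $x^ry^s$ with $r\equiv s\equiv p-1\pmod p$; your additional demand that the $y$-exponent $3(p-1-m_0)$ be an exact multiple of $p$ is incompatible with the congruence $3(p-1-m)\equiv -1\pmod p$ you have just imposed, and this spurious condition is what leads you to claim that no term survives when $p\equiv 1\pmod 3$. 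In fact $3m\equiv-2\pmod p$ has the solution $m_0=(2p-2)/3$ when $p\equiv1\pmod3$ (surviving $y$-exponent exactly $p-1$) and $m_0=(p-2)/3$ when $p\equiv2\pmod3$ (surviving $y$-exponent $2p-1$). The correct structure is therefore: for $p\equiv1\pmod3$ the span of $dx/y^2$ and $x\,dx/y^2$ maps to itself and $dx/y$ maps to a multiple of $dx/y$ (block-diagonal matrix, as in \eqref{hass1}); for $p\equiv2\pmod3$ the first two map into $\langle dx/y\rangle$ and $dx/y$ maps into $\langle dx/y^2,\,x\,dx/y^2\rangle$ (block anti-diagonal, as in \eqref{hass2}, whence rank $\le2$ and $a\ge1$ automatically).

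Moreover your rank/kernel bookkeeping contradicts your own formula: if all three basis differentials were killed then $A=0$ gives $a(\mathcal{X})=3$, not $0$, and a one-dimensional image gives rank $1$, i.e.\ $a=2$, not rank $2$. What the theorem actually requires is the opposite: rank $3$ in case 1, and both anti-diagonal blocks nonzero (rank exactly $2$) in case 2. Finally, the genuinely delicate point is the nonvanishing of the specific coefficients of $x^{p-1}$, $x^{p-2}$, $x^{2p-1}$, $x^{2p-2}$ in the relevant power of $f$; the hypothesis $a_0\neq0$ only gives $f^{m_0}$ a nonzero constant term, which says nothing about those coefficients, so your last step is a non sequitur and would need a real argument (this is also the thinnest point of the paper's own proof, whose case-2 computation of $\mathfrak{C}(dx/y)$ moreover uses the non-integral exponent $(3r-1)/3$ and misses that this differential maps into $\langle dx/y^2,\,x\,dx/y^2\rangle$, as the $p=5$ example with the matrix \eqref{hass2} shows).
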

\begin{proof}
\begin{itemize}
\item[1.] Let us compute the image of $\mathfrak{C}(\omega)$ for any $\omega \in \mathcal{B}$. Taking $p =3r+1$ and $r\in \mathbb{Z}$. Since $p -1 =3r$. We get $2p-2 = 6r$. So
  \begin{equation*}
  \begin{array}{ccccccc}
              \mathfrak{C}(xdx/y^2) &=& \mathfrak{C}(y^{2p-2}y^{-2p}xdx)  = y^{-2}\mathfrak{C}(xf(x)^{2r}dx)  \\
              &=& y^{-2}\sum_{i=0}^{2r}c_i \mathfrak{C}(x^{i+1}dx) =  c_rdx\neq 0.\\
             \end{array}
\end{equation*}
  \begin{equation*}
  \begin{array}{ccccccc}
              \mathfrak{C}(dx/y^2) &=& \mathfrak{C}(y^{2p-2}y^{-2p}dx)  = y^{-2}\mathfrak{C}(f(x)^{2r}dx)  \\
              &=& y^{-2}\sum_{i=0}^{2r}c_i \mathfrak{C}(x^{i}dx) =  c_rdx\neq 0.\\
             \end{array}
\end{equation*}
  Now we let $p -1 =3r$,
  \begin{equation*}
  \begin{array}{ccccccc}
              \mathfrak{C}(dx/y) &=& \mathfrak{C}(y^{p-1}y^{-p}dx)  = y^{-1}\mathfrak{C}(f(x)^{r}dx)  \\
              &=& y^{-1}\sum_{i=0}^{r}c_i \mathfrak{C}(x^{i}dx) =  c_rdx\neq 0.\\
             \end{array}
\end{equation*}
Thus, $a(\mathcal{X})=0$.
\item[2.] We assume that $p =3r+2$ and $r\in \mathbb{Z}$. We get $p-2=3r$. Hence
\begin{equation*}
  \begin{array}{ccccccc}
              \mathfrak{C}(xdx/y^2) &=& \mathfrak{C}(y^{p-2}y^{-p}xdx)  = y^{-1}\mathfrak{C}(xf(x)^{r}dx)  \\
              &=& y^{-1}\sum_{i=0}^{r}c_i \mathfrak{C}(x^{i+1}dx) =  c_rdx\neq 0.\\
             \end{array}
\end{equation*}
  \begin{equation*}
  \begin{array}{ccccccc}
              \mathfrak{C}(dx/y^2) &=& \mathfrak{C}(y^{p-2}y^{-p}dx)  = y^{-1}\mathfrak{C}(f(x)^{r}dx)  \\
              &=& y^{-1}\sum_{i=0}^{r}c_i \mathfrak{C}(x^{i}dx) =  c_rdx\neq 0.\\
             \end{array}
\end{equation*}
  Finally, from Remark \ref{remark} we have
  \begin{equation*}
  \begin{array}{ccccccc}
              \mathfrak{C}(dx/y) &=& \mathfrak{C}(y^{p-1}y^{-p}dx)  = y^{-1}\mathfrak{C}(f(x)^{\frac{3r-1}{3}}dx)  \\
              &=& y^{-1}\sum_{i=0}^{\frac{3r-1}{3}}c_i \mathfrak{C}(x^{i}dx) = 0.\\
             \end{array}
\end{equation*}
Therefore, $a(\mathcal{X})=1$.
\end{itemize}
 Where the coefficients $c_i \in k$ are obtained from the
 expansion
 $$y^{p-1}=f(x)^{(p-1)/2}= \sum_{i=0}^{N} c_i x^i \hspace{.7cm} ~\mbox{with}~ N=\frac{p-1}{2}.$$
\end{proof}

In \cite{picard}, Estrada Sarlabous showed that if $F^{p-1} =\sum_{i,j} c_{i,j}x^iy^j$, then the Hasss-Wit matrix of $\mathcal{B}$ is
\[H=\left( \begin{array}{cccc}
c_{p-1,p-1}& c_{2p-1,p-1} & c_{p-1,2p-1} \\
c_{p-2,p-1}&c_{2p-2,p-1} & c_{p-2,2p-1}\\
c_{p-1,p-2}& c_{2p-1,p-2} & c_{p-1,2p-2}
\end{array} \right).\]
\newline
Since $F = y^3 - f(x), c_{i,j} = 0$ for $j \not\equiv 0$ mod $3$. Thus, we have
\begin{itemize}
\item[1.] $p \equiv 1$ mod $3$. Then
\begin{equation}\label{hass1}
H=\left( \begin{array}{cccc}
c_{p-1,p-1}& c_{2p-1,p-1} & 0 \\
c_{p-2,p-1}&c_{2p-2,p-1} & 0\\
0& 0 & c_{p-1,2p-2}
\end{array} \right).
\end{equation}
where $c_{p-1,p-1}$, $c_{2p-1,p-1}$, $c_{p-2,p-1},c_{2p-2,p-1}$ are the respective coefficients of $x^{p-1}, x^{2p-1}$, $x^{p-2}$ and $x^{2p-2}$ in $f(x)^{(2p-2)/3}$ mod $p$. $c_{p-1,2p-2}$ is the coefficient of $x^{p-1}$ in $f(x)^{(p-1)/3}$ mod $p$.
\item[2.] $p \equiv 2$ mod $3$. Then
\begin{equation}\label{hass2}
H=\left( \begin{array}{cccc}
0&0 & c_{p-1,2p-1} \\
0&0 & c_{p-2,2p-1}\\
c_{p-1,p-2}& c_{2p-1,p-2} & 0
\end{array} \right).
\end{equation}
where $c_{p-1,2p-1}$ and $c_{p-2,2p-1}$ are the coefficients of $x^{p-1}$ and $x^{p-2}$ in $f(x)^{(p-2)/3}$ mod $p$, and $c_{p-1,p-2}$, and $c_{2p-1,p-2}$ are the coefficients of $x^{p-1}$ and $x^{2p-1}$ in $f(x)^{(2p-1)/3}$ mod $p$.
\end{itemize}
\begin{example}
Consider the Picard curves with the following equation:
$$y^3=x^4+1.$$
We calculate the a-number with the Hasse-Wit matrix of this curve with $p=5$ and $p=13$. Firstly, suppose that $p=5$ in this case from equation (\ref{hass2}), we have
\begin{equation*}
H=\left( \begin{array}{cccc}
0&0 & 1 \\
0&0 & 0\\
3& 0 & 0
\end{array} \right).
\end{equation*}
So $\mbox{rank}(H)=2$. From the definition of $a$-number, it is clear that $a(\mathcal{X})=g(\mathcal{X})-\mbox{rank}(H)$. Therefore, $a(\mathcal{X})=1$. Finally, assuming that $p=13$  in this case from equation (\ref{hass1}), we have
\begin{equation*}
H=\left( \begin{array}{cccc}
4& 0 & 0 \\
0&2 & 0\\
0& 0 & 4
\end{array} \right).
\end{equation*}
So $\mbox{rank}(H)=3$, hence $a(\mathcal{X})=0$. In addition, we achieve this result from MAGMA computation \cite{magma}.
\end{example}

\paragraph*{\textbf{Acknowledgements.}}
This paper was written while Vahid Nourozi was visiting Unicamp (Universidade Estadual de Campinas) supported by TWAS/Cnpq (Brazil) with fellowship number $314966/2018-8$. We are extremely grateful to the referee for their valuable comments and suggestions, which led us to find correct references for many assertions and improve the exposition.
%    Bibliographies can be prepared with BibTeX using amsplain,
%    amsalpha, or (for "historical" overviews) natbib style.
\bibliographystyle{amsplain}

\end{document}